
\documentclass[11 pt,a4paper,twoside,reqno]{amsart}
\usepackage{amsfonts,amssymb,amscd,amsmath,enumerate,verbatim,calc}

\textwidth = 12.5 cm
\textheight = 20 cm
\topmargin = 0.5 cm
\oddsidemargin = 1 cm
\evensidemargin = 1 cm
\pagestyle{plain}

\usepackage[applemac]{inputenc}
\usepackage[pdftex]{graphicx}

 \usepackage{hyperref}

\usepackage{xcolor}
	\hypersetup{
    colorlinks,
    linkcolor={red!50!black},
    citecolor={blue!50!black},
    urlcolor={blue!80!black}
}

\usepackage{xy}
\usepackage{xypic} 
 
\usepackage{multicol}
\usepackage{xcolor}
\usepackage{amsthm} 
\usepackage{mathrsfs} 
\def\C{{\mathbb C}}
\def\N{{\mathbb N}}

\def\Z{{\mathbb Z}}

\def\calT{{\mathcal T}}

\def\rmd{{\mathrm d}}
\def\rme{{\mathrm e}}
\def\rmi{{\mathrm i}}

\def\uk{{\underline{k}}}
\def\uz{{\underline{z}}}
\def\uw{{\underline{w}}}
\def\ut{{\underline{t}}}
\def\us{{\underline{s}}}
\def\ue{{\underline{e}}}
\def\utau{{\underline{\tau}}}
\def\utheta{{\underline{\theta}}}
\def\uw{{\underline{w}}}
\def\unu{{\underline{\nu}}}

\newtheorem{theorem}{Theorem}
\newtheorem{corollary}{Corollary}
\newtheorem{proposition}{Proposition}
\newtheorem{lemma}{Lemma}
\newtheorem*{remark}{Remark}

 \textheight=21cm \textwidth=15cm
\oddsidemargin=1 true cm \evensidemargin=0.0pt 

 \begin{document}

\noindent{\small 
\href{http://www.rsmams.org/}{Journal of Ramanujan Society of Mathematics and Mathematical Sciences (JRSMAMS)}
\\
Vol. 9, No. 1, December 2021
   }
 
 \bigskip
\begin{center}
{
\Large
\bf 
On entire functions of several variables 
\\
with derivatives of even order taking integer values

\bigskip
\large
 Michel Waldschmidt
 
 \hskip .01 true cm
 \vtop{\hsize 1cm \kern 1mm
 \hrule  width 16mm  \kern 1mm}
 }

 \sl
 Sorbonne Université and Université de Paris, CNRS, IMJ-PRG, F-75005 Paris, France.

 \tt
 \href{mailto:michel.waldschmidt@imj-prg.fr}{michel.waldschmidt@imj-prg.fr}
 
 \url{http://www.imj-prg.fr/~michel.waldschmidt}

\end{center}

\section*{Abstract} 

We extend to several variables an earlier result of ours, according to which an entire function of one variable of sufficiently small exponential type, having all derivatives of even order  taking integer values at two points, is a polynomial. The proof in the one dimensional case relies on Lidstone expansion of the function. For $n$ variables, we need $n+1$ points, having the property that the differences of $n$ of them with the remaining one give a basis of $\C^n$. The proof is by reduction to the one variable situation.

\bigskip
\subsection*{Keywords} 
 Integer valued entire functions, Lidstone polynomials, exponential type, P\'olya's Theorem, analytic functions of several variables.

\bigskip
\noindent
{\bf AMS Mathematics Subject Classification 2020}:
30D15, 41A58.
 
 \section{The main result}

We denote by $\N$ the set $\{0,1,2,\dots\}$. 
For $\uz=(z_1,\dots,z_n)\in\C^n$ and $\ut=(t_1,\dots,t_n)\in\N^n$, write 
$$
\uz^\ut=z_1^{t_1}\cdots z_n^{t_n},\quad |\uz|=\max_{1\le i\le n}|z_i|, \quad\Vert \ut\Vert=t_1+\cdots+t_n, \quad \ut!=t_1!\cdots t_n!
$$
and
$$
D^\ut=\left(\frac{\partial}{\partial z_1}\right)^{t_1}
\cdots
\left(\frac{\partial}{\partial z_n}\right)^{t_n}.
$$
For $\uz=(z_1,\dots,z_n)$ and $\uw=(w_1,\dots,w_n)$ in $\C^n$, let $\uw\uz=w_1z_1+\cdots+w_nz_n$. 
For $r>0$ and for $f$ an analytic function in a domain containing  $\{\uz\in\C^n\; \mid \; |\uz|\le r\}$, set
$$
|f|_r=\sup_{|\uz|=r}|f(\uz)|.
$$
The order of an entire function  $f$  is
$$
\varrho(f)=\limsup_{r\to\infty} \frac{\log\log |f|_r}{\log r}
$$
and its exponential type 
$$
\tau(f)= \limsup_{r\to\infty} \frac{ \log |f|_r}{r}\cdotp
$$
Given $\tau>0$, $\uw\in\C^n\setminus\{0\}$ and an entire function $f$ in $\C^n$, we say that $f$ has \emph{exponential type $\le \tau$ in the direction $\uw$} if the function of a single variable $\uz\mapsto f(\uw\uz )$ has order $\le 1$ and exponential type $\le \tau$, namely
$$
\limsup_{r\to\infty} \frac{1}{ r}  \log \sup_{|\uz|\le r} |f(\uw\uz)|\le \tau.
$$
It depends not only on $\uw/|\uw|$ but also on $|\uw|$. 

 Let $\us_0,\us_1,\dots,\us_n$ be $n+1$ elements in  $\C^n$, such that $\{\us_1-\us_0,\dots,\us_n-\us_0\}$ is a basis of $\C^n$ over $\C$. 
 
Finally, let $\calT$ be the set of  $(\ut,i)\in\N^n\times\{0,\dots,,n\}$ satisfying
$$
\Vert \ut \Vert \text{ even for all  $i=0,1,\dots,n$ and } t_1,\ldots,t_i\text{ even for }i=1,\dots,n.
$$
 
 The main result of this paper is the following.

\begin{theorem}\label{Th:but}
Let $f$ be an entire function of  $n$ variables having exponential type $\le \tau $  in each of the directions $\us_i-\us_0$ ($i=1,\dots,n$) satisfying
\begin{equation}\label{eq:maingrowthcondition}
\limsup_{r\to\infty}\rme^{-r}\sqrt r |f|_r<\frac{1}{\sqrt{2\pi}}
\rme^{-\max \{|s_0|,\dots,|s_n|\}}.
\end{equation}
Assume  
\begin{equation}\label{Eq:but}
(D^\ut f)(\us_i) \in\Z \text{ for all $(\ut,i)\in\calT$.}
\end{equation}
Then  the set of  $(\ut,i)\in\calT$ with $(D^\ut f)(\us_i)\not=0$ is finite.  
Further, if  
\begin{equation}\label{eq:majorationtype}
\tau<\pi,
\end{equation}
then  $f$  is a polynomial in $\C[\uz]$. 
\end{theorem}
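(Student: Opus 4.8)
The plan is to prove the two assertions separately. The finiteness of the set of non‑vanishing values will follow from the growth hypothesis \eqref{eq:maingrowthcondition} alone, by Cauchy's inequalities; the polynomiality will then be obtained, using in addition \eqref{eq:majorationtype}, from an iterated Lidstone expansion, arguing by induction on $n$ and reducing at each step to the one–variable situation.

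\emph{Finiteness.} For $(\ut,i)\in\calT$ I would bound $(D^\ut f)(\us_i)$ by applying Cauchy's formula in each variable on the polycircle centred at $\us_i$ of polyradius $(\rho,\dots,\rho)$. Since the modulus of $f$ there is at most $|f|_{\rho+M}$, with $M=\max_{0\le j\le n}|\us_j|$, and $\ut!\le(\Vert\ut\Vert)!$, one gets
$$
|(D^\ut f)(\us_i)|\ \le\ \frac{\ut!}{\rho^{\Vert\ut\Vert}}\,|f|_{\rho+M}\ \le\ \frac{(\Vert\ut\Vert)!}{\rho^{\Vert\ut\Vert}}\,|f|_{\rho+M}.
$$
By \eqref{eq:maingrowthcondition} there is $\varepsilon>0$ with $|f|_r\le(1-\varepsilon)(2\pi r)^{-1/2}\rme^{\,r-M}$ for all large $r$; taking $\rho=\Vert\ut\Vert$ and applying Stirling's formula, the bound collapses to $(1-\varepsilon)\bigl(\Vert\ut\Vert/(\Vert\ut\Vert+M)\bigr)^{1/2}\rme^{\,o(1)}$, which is $<1$ once $\Vert\ut\Vert$ is large enough. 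As the numbers $(D^\ut f)(\us_i)$ are integers by \eqref{Eq:but}, they vanish for $\Vert\ut\Vert$ large, whence the finiteness. (This step uses neither \eqref{eq:majorationtype} nor the directional type hypothesis.)

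\emph{Reduction to one variable.} Assuming $\tau<\pi$, fix $m_0$ with $(D^\ut f)(\us_i)=0$ whenever $(\ut,i)\in\calT$ and $\Vert\ut\Vert\ge m_0$, and induct on $n$. For $n=1$, the entire function $g(z)=f(\us_0+z(\us_1-\us_0))$ has exponential type $\le\tau<\pi$, and its derivative of order $2k$ at $0$ (resp. $1$) is $(\us_1-\us_0)^{2k}$ times the integer $(D^{(2k)}f)(\us_0)$ (resp. $(D^{(2k)}f)(\us_1)$), which vanishes for $2k\ge m_0$; plugging this into the Lidstone expansion of $g$ — valid because its type is $<\pi$ — exhibits $g$, hence $f$, as a finite linear combination of Lidstone polynomials. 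For the inductive step I would peel off one coordinate: expanding $f$ in the variable associated with the pair $(\us_0,\us_1)$ by Lidstone's formula between the corresponding two values of that coordinate, one writes $f$ as a series whose coefficients are, up to polynomial Lidstone functions, the partial derivatives $(\partial/\partial z_1)^{2k}f$ evaluated at those two coordinate values. The flag structure of $\calT$ is exactly what makes the integrality \eqref{Eq:but} and the vanishing just obtained descend to these coefficient functions, now entire functions of $n-1$ variables attached to the points read off from $\us_0,\us_2,\dots,\us_n$; an extra Cauchy estimate shows their growth is better than needed (indeed $\limsup_{r\to\infty}\rme^{-r}\sqrt r\,|\cdot|_r=0$), and their type in the relevant directions is $<\pi$, so the $(n-1)$–variable statement applies and shows each is a polynomial. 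Since by the finiteness statement only finitely many are non‑zero, the Lidstone series for $f$ is a finite sum of polynomials times Lidstone functions, hence $f\in\C[\uz]$.

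\emph{Main obstacle.} The crux is the inductive step, precisely the point of making the expansion "close up". After peeling off the first coordinate, the half of the expansion attached to $\us_1$ produces functions of $n-1$ variables only one of whose base points is visible, so the $(n-1)$–variable statement cannot be applied to it verbatim. I expect one must combine the vanishing furnished by the finiteness statement (which forces these functions to be odd, about the visible base point, once the order is large enough) with the self–referential identity provided by the Lidstone expansion itself, together with a short Pólya–type argument — iterating the reflections through two distinct base points yields a translation, and an entire function of exponential type $<\pi$ that is "almost periodic" in this sense is forced to be a polynomial. Carrying this out uniformly in $n$, and keeping track throughout of which exponential–type bound is available along which coordinate line, is where the real work lies; no single estimate is the difficulty, the bookkeeping is.
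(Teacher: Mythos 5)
Your finiteness argument is correct and is essentially the paper's: Cauchy's inequalities on a polydisc of radius comparable to $\Vert\ut\Vert$ plus Stirling give $|(D^\ut f)(\us_i)|<1$ once $\Vert\ut\Vert$ is large, and integrality finishes it. The second half, however, has a genuine gap, and you have in fact named it yourself: the inductive ``peeling'' of one coordinate via a Lidstone expansion does not close up, because the coefficient functions attached to the point $\us_1$ are entire functions of $n-1$ variables for which you no longer have a full configuration of base points in general position, so the $(n-1)$--variable statement cannot be applied to them. Your ``main obstacle'' paragraph is a description of the missing idea, not a resolution of it. Moreover the whole strategy presupposes a convergent multivariate Lidstone expansion for entire functions of exponential type $<\pi$ in each variable; that is a substantially stronger statement than the theorem itself (the paper explicitly defers it to a separate forthcoming work) and cannot be taken for granted here.

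The paper closes the argument by a different and much lighter device. Once $T_0$ is found with $(D^\ut f)(\us_i)=0$ for all $(\ut,i)\in\calT$ with $\Vert\ut\Vert\ge T_0$, fix any $\utau\in(2\N)^n$ with $\Vert\utau\Vert\ge T_0$ and set $\hat f=D^\utau f$. The flag structure of $\calT$ gives $(\ut+\utau,i)\in\calT$ for every $(\ut,i)\in\calT$, so all the $\calT$--data of $\hat f$ vanishes; a uniqueness statement (an entire function of exponential type $<\pi$ in the directions $\us_i-\us_0$ whose $\calT$--derivatives all vanish at the points $\us_i$ is identically zero) then forces $\hat f=0$. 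That uniqueness statement is what is proved by induction on $n$ --- not by Lidstone expansions but by Poritsky's parity/periodicity argument (odd about two points implies periodic, then $f(z)=g(\rme^{\pi\rmi z})$ and a growth estimate on $g$ show $f$ is constant), reduced to the canonical basis by an affine change of variables. Having $D^\utau f=0$ for all even $\utau$ of large weight, one concludes that all Taylor coefficients of sufficiently large total degree vanish, i.e.\ $f$ is a polynomial. If you wanted to salvage your route you would have to prove that uniqueness statement anyway, since it is exactly what makes any expansion ``close up''; at that point the expansion itself becomes unnecessary.
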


The case  $n=1$ of Theorem \ref{Th:but} is  Corollary 1.2 of  \cite{SEAMS}. 

We will see (Corollary \ref{Corollary:coefficientsrationnels}) that when $K$ is a subfield of $\C$ such that $\us_i\in K^n$ for all $i=0,1,\dots,n$, then the conclusion of Theorem \ref{Th:but}  is $f\in K[\uz]$. 

Assumption \eqref{eq:maingrowthcondition} implies that $f$ has exponential type $\le 1$. Hence, when 
$$
\max_{1\le i\le n}|\us_i-\us_0|< \pi,  
$$
the hypothesis \eqref{eq:majorationtype} is a consequence of \eqref{eq:maingrowthcondition}. An example is  when $\us_0=(0,\dots,0)$ and $(\us_1,\dots,\us_n)$ is the canonical basis  of $\C^n$.

Theorem \ref{Th:but} cannot be improved in general. Here are some examples. Assume 
 $$
 \us_0=(a_1,a_2,\dots,a_n), \;    \us_i=\us_0+(b_i-a_i)\ue_i \quad (i=1,\dots,n),
 $$
where $a_i$ and $b_i$ are complex numbers with $a_i\not=b_i$ for $1\le i\le n$, while $\ue_1,\dots,\ue_n$ is the canonical basis of $\C^n$.  
Our first example is the function 
$$
\sin\left(\pi\frac{z_1-a_1}{ b_1-a_1}+\cdots +  \pi\frac{z_n-a_n}{ b_n-a_n}\right),
$$
which has exponential type $\le \pi$ in each of the directions $\us_i-\us_0$ ($i=1,\dots,n$) and satisfies  $(D^\ut f)(\us_i)=0$ for $i=0,1,\dots,n$ and  for any $\ut\in\N^n$ with $\Vert \ut\Vert$ even.

For our second  example, we define $\utheta:\C^n\to\C^n$ as follows
$$
\utheta(\uz)=\left(
\frac{z_1-a_1}{b_1-a_1},\dots,\frac{z_n-a_n}{b_n-a_n}\right). 
$$
 For $1\le i\le n-1$, let $g_i$ be a polynomial in $n-i$ variables with complex coefficients, and let $g_n$ be a polynomial in a single variable with complex coefficients.
 Consider the entire function of $n$ variables $\uw=(w_1,\dots,w_n)$:
 $$
 \varphi(\uw)=\sum_{i=1}^{n-1} \sin(\pi w_i)g_i(w_{i+1}^2,\dots,w_n^2)+\sin(\pi w_n)g_n(w_{n-1}^2).
$$ 
 Then the  function
 $$
 f(\uz)=\varphi\circ \utheta(\uz)
$$
has exponential type $\le \pi$ in each of the directions $\us_i-\us_0$ ($i=1,\dots,n$) and satisfies  $(D^\ut f)(\us_i)=0$ for all $(\ut,i)\in\calT$. 
 
For our third example, we keep the same notation for  $\us_i$ and $\utheta$, we assume that $a_i-b_i\not\in\pi\rmi\Z$ for $1\le i\le n$, we also assume that the polynomials $g_i$ have integer coefficients and  we set
$$
 \psi(\uw)=\sum_{i=1}^{n-1} \frac{\sinh(w_i- b_i)}{\sinh(a_i-b_i)} g_i(w_{i+1}^2,\dots,w_n^2)+\frac{\sinh(w_n- b_n)}{\sinh(a_n-b_n)} g_n(w_{n-1}^2).
$$
Then  the function 
 $$
 f(\uz)=
 \psi\circ \utheta(\uz)
$$
has exponential type $\le 1$ in each of the directions $\us_i-\us_0$ ($i=1,\dots,n$) and satisfies  $(D^\ut f)(\us_i)\in\Z$ for all $(\ut,i)\in\calT$. 
 
Another reason for which Theorem \ref{Th:but} is optimal is the following. If we relax the assumption  \eqref{Eq:but} by requiring that it holds outside a finite subset of $\calT$, the conclusion that $f$ is a polynomial is still valid - this follows easily from Theorem \ref{Th:but}. But if we impose  the conditions   \eqref{Eq:but} only outside an infinite subset of $\calT$, then the conclusion is no more valid. We come back to this issue in the remark at the end of \S~\ref{S:SpecialCase}.

\section{An extension of a result due to P\'olya}

The proof of the first part of Theorem \ref{Th:but} rests on Proposition \ref{Proposition:Polya}, which is an extension to several variables of  Proposition 2.2 of  \cite{SEAMS}.

We will use Cauchy's inequalities for an analytic function of several variables  (\cite[Theorem 2.2.7 p. 27]{Hormander}). 
Let  $r>0$, let $\ut\in\N^n$ with $\Vert \ut\Vert =T$ and let $f$ be analytic in a domain containing  $\{\uz\in\C^n\; \mid \; |\uz|\le r\}$. Then 
$$
\frac{|(D^\ut f)(0)|}{\ut!}r^T \le |f|_{r}.
$$
We deduce that for  $\uz_0\in\C^n$ and for   $f$ analytic in a domain containing  
$$
\{\uz\in\C^n\; \mid \; |\uz|\le r+|\uz_0|\}, 
$$
we have
\begin{equation}\label{Equation:CauchyInequality}
\frac{|(D^\ut f)(\uz_0)|}{\ut!}r^T \le |f|_{r+|\uz_0|}.
\end{equation}

We will also use Stirling's Formula:
\begin{equation}\label{Equation:Stirling}
N^N\rme^{-N}\sqrt{2\pi N}
< N! <
N^N \rme^{-N} \sqrt{2\pi N} \rme^{1/(12N)}, 	
\end{equation}
which is valid for all $N\ge 1$.

\begin{proposition}\label{Proposition:Polya}
Let $f$ be an entire function in $\C^n$ and let $A\ge 0$. Assume 
\begin{equation}\label{eq:maingrowthconditionA}
\limsup_{r\to\infty}\rme^{-r}\sqrt r |f|_r< \frac{ \rme^{-A}}{\sqrt{2\pi} }\cdotp
\end{equation}
Then there exists $T_0>0$ such that, for $\ut\in\N^n$ with $\Vert \ut\Vert\ge T_0$, we have 
$$
|D^\ut f|_A<1.
$$ 
\end{proposition}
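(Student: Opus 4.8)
The plan is to run P\'olya's classical Cauchy-inequality argument: estimate a high-order derivative of $f$ at a point of modulus $\le A$ by \eqref{Equation:CauchyInequality}, choose the radius so as to minimize the resulting bound, and then apply Stirling's formula \eqref{Equation:Stirling} so that the numerical constants cancel and the hypothesis \eqref{eq:maingrowthconditionA} is used sharply. First I would unpack \eqref{eq:maingrowthconditionA}: fix a real number $c$ with
$$
\limsup_{r\to\infty}\rme^{-r}\sqrt r\,|f|_r<c<\frac{\rme^{-A}}{\sqrt{2\pi}},
$$
so that there is $r_0\ge 1$ with $|f|_\rho\le c\,\rme^{\rho}/\sqrt{\rho}$ for all $\rho\ge r_0$.

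Next I would fix $\ut\in\N^n$, set $T=\Vert\ut\Vert$, and take $\uz_0\in\C^n$ with $|\uz_0|\le A$. Since $f$ is entire and $|f|_\rho$ is non-decreasing in $\rho$ (maximum modulus), \eqref{Equation:CauchyInequality} gives, for every $r>0$,
$$
|(D^\ut f)(\uz_0)|\le\frac{\ut!}{r^T}\,|f|_{r+|\uz_0|}\le\frac{\ut!}{r^T}\,|f|_{r+A}.
$$
The function $r\mapsto\rme^{r}/r^{T}$ on $(0,\infty)$ attains its minimum at $r=T$; for $T\ge r_0$ we then have $T+A\ge r_0$, so taking $r=T$ and inserting the growth estimate above with $\rho=T+A$ yields
$$
|(D^\ut f)(\uz_0)|\le \ut!\;c\,\frac{\rme^{T+A}}{T^{T}\sqrt{T+A}}.
$$

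To finish I would bound $\ut!$ by $T!$ — the multinomial coefficient $T!/\ut!$ is a positive integer — and apply the upper estimate in \eqref{Equation:Stirling} to $T!$; the factors $T^{T}$ and $\rme^{\pm T}$ cancel, leaving
$$
|(D^\ut f)(\uz_0)|\le c\,\rme^{A}\sqrt{2\pi}\;\rme^{1/(12T)}\sqrt{\frac{T}{T+A}}.
$$
By the choice of $c$ we have $c\,\rme^{A}\sqrt{2\pi}<1$, whereas $\rme^{1/(12T)}\sqrt{T/(T+A)}\to1$ as $T\to\infty$; hence there is $T_0>0$ such that the right-hand side is $<1$ for every $T\ge T_0$. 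As this holds for all $\uz_0$ with $|\uz_0|\le A$, taking the supremum over $|\uz_0|=A$ gives $|D^\ut f|_A<1$ whenever $\Vert\ut\Vert\ge T_0$.

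I do not expect a genuine obstacle: the whole content is arranging the constants so that $1/\sqrt{2\pi}$ together with the factor $\rme^{-A}$ is precisely what the argument consumes. The two points needing care are that one must use the sharp multi-index inequality $\ut!\le T!$ rather than a cruder bound on $\ut!$, and the refined Stirling estimate with the $\rme^{1/(12T)}$ correction rather than merely $T!\le T^{T}$, so that the $\sqrt{2\pi}$ is absorbed without slack; one should also check that $r=T$ is an admissible radius once $T$ is large and that the residual factor $\rme^{1/(12T)}\sqrt{T/(T+A)}$ really tends to $1$. Both are routine.
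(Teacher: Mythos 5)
Your proof is correct and follows essentially the same route as the paper: Cauchy's inequalities \eqref{Equation:CauchyInequality} with the bound $\ut!\le T!$, followed by the refined Stirling estimate \eqref{Equation:Stirling} so that $\sqrt{2\pi}$ and $\rme^{-A}$ are consumed exactly. The only cosmetic difference is the choice of radius ($r=T$, evaluating $|f|$ at $T+A$, versus the paper's $r=T-A$, evaluating $|f|$ at $T$); both yield a residual factor tending to $1$.
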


\begin{proof}
From assumption \eqref{eq:maingrowthconditionA}, it follows that there exists  $\eta>0$ such that, for $T$ sufficiently large, we have 
$$
|f|_T< (1-\eta) \frac{ \rme^{T-A}}{\sqrt{2\pi T} }\cdotp
$$
Let  $\ut\in\N^n$ with  $\Vert \ut\Vert=T$.
We use Cauchy's inequalities \eqref{Equation:CauchyInequality} with $r=T-A$: since $\ut!\le T!$, for $|\uz|\le A$ we have 
$$
|(D^\ut f)(\uz)|\le \frac{T!}{(T-A)^T}|f|_T.
$$
Hence the upper bound for $T!$ given by the right hand side of  \eqref{Equation:Stirling} yields
$$
\sup_{|\uz|\le A} |(D^\ut f)(\uz)| \le (1-\eta) 
 \rme^{-A+1/(12T)} 
 \left( 1-\frac A T\right)^{-T}.
$$
For $T$ sufficiently large, the right hand side is $<1$.
\end{proof}

We deduce the following extension to several variables of Corollary 2.4 of  \cite{SEAMS}.

\begin{corollary}\label{Corollary:Polya}
Let $f$ be  a transcendental 
entire function in $\C^n$. Let $A\ge 0$. 
Assume 
\eqref{eq:maingrowthconditionA}. 
Then the set
$$
\bigl\{(\ut,\uz_0)\in\N^n\times \C^n \; \mid \; |\uz_0|\le A, \; (D^\ut f) (\uz_0)\in\Z\setminus\{0\} \bigr\}
$$
is finite. 
\end{corollary}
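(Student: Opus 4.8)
The plan is to treat the two coordinates of a contributing pair $(\ut,\uz_0)$ separately: first I would bound, \emph{uniformly over the whole ball} $\{|\uz_0|\le A\}$, the multi-indices $\ut$ that can occur, and then, for each of the finitely many surviving $\ut$, study the set of admissible base points $\uz_0$. For the first step, note that a transcendental $f$ is not identically zero and that \eqref{eq:maingrowthconditionA} is exactly the hypothesis of Proposition \ref{Proposition:Polya}; hence there is $T_0>0$ with $|D^\ut f|_A<1$ for every $\ut\in\N^n$ with $\Vert\ut\Vert\ge T_0$. Since $D^\ut f$ is holomorphic on a neighbourhood of the closed polydisc $\{|\uz|\le A\}$ and $|\uz|$ is the sup-norm, the maximum modulus principle gives $\sup_{|\uz_0|\le A}|(D^\ut f)(\uz_0)|=|D^\ut f|_A<1$. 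Therefore, once $\Vert\ut\Vert\ge T_0$, the value $(D^\ut f)(\uz_0)$ cannot be a nonzero integer for any $\uz_0$ in the ball, so every contributing pair satisfies $\Vert\ut\Vert<T_0$. This confines $\ut$ to the finite set $\{\ut\in\N^n:\Vert\ut\Vert<T_0\}$, uniformly in $\uz_0$; this is the robust half of the argument, and it already covers the whole ball.

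It then remains to fix a multi-index $\ut$ with $\Vert\ut\Vert<T_0$ and to control the set of $\uz_0$ with $g(\uz_0)\in\Z\setminus\{0\}$, where $g=D^\ut f$. On the compact polydisc $\{|\uz_0|\le A\}$ the continuous function $|g|$ is bounded, say by $M$, so $g$ assumes there only the finitely many integer values $m$ with $0<|m|\le M$. Consequently the admissible $\uz_0$ lie in the finite union of level sets $\bigcup_{0<|m|\le M}\{\uz_0:|\uz_0|\le A,\ g(\uz_0)=m\}$, and the whole question is reduced to bounding the number of points of each such level set inside the ball.

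The counting of these level sets is where I expect the genuine difficulty, and it is a purely several-variable phenomenon. In one variable, which is the setting of Corollary 2.4 of \cite{SEAMS}, each level set is the zero set of the entire function $g-m$; as $g$ is nonconstant this zero set is discrete, hence finite in the compact disc, and summing over the finitely many $m$ and the finitely many $\ut$ finishes the proof. For $n\ge 2$ this step does not transfer: a level set $\{g=m\}$ is a complex-analytic hypersurface, so wherever it is nonempty it has dimension $n-1\ge 1$ and meets the ball in an infinite set; moreover, in contrast to the one-variable case, $g=D^\ut f$ may even be a nonzero constant while $f$ is transcendental (for instance when $f$ has a summand linear in one variable and transcendental in the others), in which case the level set is the whole ball. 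Thus the main obstacle is to establish, under the standing hypotheses, that each relevant level set meets $\{|\uz_0|\le A\}$ in only finitely many points; this is precisely the point that is not a formal consequence of the one-variable result, and it is where I would concentrate. Resolving it for $\uz_0$ ranging over the entire ball requires some additional input forcing the level sets to be finite — for example confining the admissible $\uz_0$ to a discrete set transverse to these hypersurfaces, as occurs in the application to Theorem \ref{Th:but}, where $\uz_0$ runs only through the $n+1$ points $\us_0,\dots,\us_n$.
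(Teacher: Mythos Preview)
Your first step is exactly what the paper has in mind: the paper states Corollary~\ref{Corollary:Polya} with no proof, simply as a consequence of Proposition~\ref{Proposition:Polya}, and the only content there is precisely your uniform bound $\Vert\ut\Vert<T_0$ coming from $|D^{\ut}f|_A<1$.

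Your diagnosis of the second step is also correct, and in fact it shows that the corollary, read literally for $n\ge 2$ with $\uz_0$ ranging over the whole polydisc $\{|\uz_0|\le A\}$, is not true in general. Your own example already gives a counterexample: take $f(z_1,z_2)=z_1+\sin(z_2/2)$, which is transcendental and satisfies \eqref{eq:maingrowthconditionA} for every $A\ge 0$ since $\rme^{-r}\sqrt{r}\,|f|_r\to 0$; then $D^{(1,0)}f\equiv 1$, so $((1,0),\uz_0)$ belongs to the set for every $\uz_0$ with $|\uz_0|\le A$. Even when no derivative is constant, the level sets $\{D^{\ut}f=m\}$ are analytic hypersurfaces and meet the polydisc in uncountable sets whenever they meet it at all, exactly as you say. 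So there is no missing trick to be found: the obstruction you isolate is real.

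It is worth noting that the corollary is never invoked in the paper. The proof of Theorem~\ref{Th:but} appeals directly to Proposition~\ref{Proposition:Polya} at the finitely many points $\us_0,\dots,\us_n$, which is precisely the ``discrete'' situation you single out at the end of your proposal; there the second step is vacuous and your first step alone gives the conclusion. In other words, your argument is complete and matches the paper once $\uz_0$ is restricted to a fixed finite subset of $\{|\uz_0|\le A\}$, and that restricted statement is all that is needed downstream.
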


\section{A special case: $(\us_0,\us_1,\dots,\us_n)=(\ue_0,\ue_1,\dots,\ue_n)$}\label{S:SpecialCase}

Set $\ue_0=(0,\dots,0)\in\C^n$ and denote, as before,  by 
$\{\ue_1,\dots,\ue_n\}$ the canonical basis   of $\C^n$: 
$$
\ue_{ij}=\delta_{ij} \quad (1\le i,j\le n).
$$
We will say that an entire function $f$ in $\C^n$ has \emph{exponential type $\le \tau $  in each of the variables} if it has  exponential type $\le \tau $  in each of the directions  $\ue_1,\dots,\ue_n$: in other words, for any $i=1,\dots,n$ and any $(z_1,\dots,z_{i-1},z_{i+1},\dots,z_n)\in\C^{n-1}$, 
$$
\limsup_{r\to\infty} \frac{1}{ r} \log \sup_{|z_i|\le r} |f(z_1,\dots,z_n)|\le \tau.
$$
The main result of this section is the following.

\begin{proposition} \label{Prop:PoritskyGeneralise}
Let   $f$  be an entire function in $\C^n$ of exponential type $<\pi$  in each of the variables. Assume 
\begin{equation}\label{Eq:unicityei}
(D^\ut f)(\ue_i)=0\text{ for all } (\ut,i)\in\calT.
\end{equation}
Then $f=0$. 
\end{proposition}

The case  $n=1$ of Proposition \ref{Prop:PoritskyGeneralise}  is a result due to Poritsky  (cf. \cite{Poritsky} and \cite[\S~3.1]{SEAMS}). We will prove  Proposition \ref{Prop:PoritskyGeneralise} by induction on $n$, starting with and including the case $n=1$. The proof in the one dimensional case will use the following two well known lemmas dealing with univariate functions. 

\begin{lemma}\label{Lemma:f(z)periodic}
Let $f$ be an entire function in $\C$. The two following conditions are equivalent:
\\
{\rm (i)} The  function $f$ is periodic of period $\omega\not=0$ ;
\\
{\rm (ii)} There exists a function $g$ analytic in $\C^\times$ such that $f(z)=g(\rme^{2\pi\rmi  z/\omega})$. 
\end{lemma}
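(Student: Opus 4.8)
The plan is to prove the two implications separately, the implication (ii)$\Rightarrow$(i) being immediate and the implication (i)$\Rightarrow$(ii) being the substantive direction. For (ii)$\Rightarrow$(i): if $f(z)=g(\rme^{2\pi\rmi z/\omega})$ with $g$ analytic on $\C^\times=\C\setminus\{0\}$, then replacing $z$ by $z+\omega$ multiplies the argument $\rme^{2\pi\rmi z/\omega}$ by $\rme^{2\pi\rmi}=1$, so $f(z+\omega)=f(z)$; moreover the composite of entire and analytic functions is entire, which is consistent with the hypothesis that $f$ is entire.

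For (i)$\Rightarrow$(ii): assume $f$ is entire and $f(z+\omega)=f(z)$ for all $z\in\C$. The idea is to factor $f$ through the exponential map $z\mapsto q:=\rme^{2\pi\rmi z/\omega}$, which realizes $\C$ as the universal cover of $\C^\times$, with the fibers of the covering being precisely the orbits of the translation group $\omega\Z$. Concretely, I would fix any branch of the logarithm to write, for $q\in\C^\times$, a preimage $z=\frac{\omega}{2\pi\rmi}\log q$, and set $g(q)=f\!\left(\frac{\omega}{2\pi\rmi}\log q\right)$; the periodicity of $f$ with period $\omega$ guarantees that $g(q)$ does not depend on the choice of branch (two choices of $\log q$ differ by an integer multiple of $2\pi\rmi$, hence the two values of $z$ differ by an integer multiple of $\omega$), so $g$ is a well-defined single-valued function on $\C^\times$. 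Locally on any simply connected subset of $\C^\times$ one can choose a holomorphic branch of $\log$, and there $g$ is a composition of holomorphic functions, hence holomorphic; since holomorphy is a local property, $g$ is analytic on all of $\C^\times$. Finally $g(\rme^{2\pi\rmi z/\omega})=f(z)$ by construction, because applying the chosen logarithm to $\rme^{2\pi\rmi z/\omega}$ returns $z$ up to a period.

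The only delicate point — and it is quite mild — is to check that $g$ is genuinely well defined and single-valued on $\C^\times$, i.e. that the ambiguity in $\log q$ is exactly absorbed by the assumed periodicity; everything else (holomorphy via local branches, the verification of the identity $f(z)=g(\rme^{2\pi\rmi z/\omega})$) is then routine. Alternatively, and perhaps more cleanly, one can avoid logarithms altogether: cover $\C^\times$ by the two slit planes $U_\pm=\C\setminus\R_{\mp}$, on each of which a holomorphic branch $L_\pm$ of $\log$ exists, define $g$ on $U_\pm$ by $g=f\circ(\frac{\omega}{2\pi\rmi}L_\pm)$, and observe that on the overlap $U_+\cap U_-$ the two branches $L_\pm$ differ by a constant in $2\pi\rmi\Z$, so by periodicity of $f$ the two definitions of $g$ agree on the overlap and patch to a single analytic function on $\C^\times$. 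This is the standard descent-through-a-covering argument, and I would present it in whichever of the two forms is shorter in context.
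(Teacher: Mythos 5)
Your proof is correct and follows essentially the same route as the paper: both descend $f$ through the covering map $z\mapsto \rme^{2\pi\rmi z/\omega}$ of $\C^\times$, using periodicity to make $g$ well defined and local branches of the logarithm (equivalently, local sections of the covering) to get analyticity. Your write-up is in fact a bit more explicit than the paper's about why $g$ is single-valued and holomorphic.
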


\begin{proof}
Clearly {\rm (ii) } implies {\rm (i)}. Assume {\rm (i)}. 
The map  $z\mapsto \rme^{\pi\rmi z}$ is analytic and surjective. 
The condition $\rme^{\pi\rmi  z_1}=\rme^{\pi\rmi  z_2}$ implies $f(z_1)=f(z_2)$. 
Hence there exists a unique map $g:\C^\times\to\C$ such that  $g(\rme^{2\pi\rmi  z})=f(z)$. 
$$\xymatrix{ \C \ar[d]^{^{\rme^{2\pi\rmi  z}}} \ar[r]^f & \C \cr
 \C^\times \ar@{.>}[ur]_g}
$$
Let  $w\in\C^\times$ and let $z\in\C$  be such that  $w=\rme^{2\pi\rmi  z}$. From $g(w)=f(z)$ it follows that $g$ is holomorphic, hence analytic, in $\C^\times$.

This proves  lemma \ref{Lemma:f(z)periodic}. 
\end{proof}

\begin{lemma}\label{lemma:g(exp2ipiz)}
If $g$ is an analytic function in $\C^\times$, if $\omega$ is a nonzero complex number  and if the entire function $g(\rme^{2\pi\rmi  z/\omega})$ has an exponential  type $< 2(N+1)\pi/|\omega|$ for some nonnegative integer $N$, then $w^Ng(w)$ is a polynomial of degree $\le 2N$. 

As a consequence, if   $g(\rme^{2\pi\rmi  z/\omega})$ has a type $< 2\pi/|\omega|$, then $g$ is constant.
\end{lemma}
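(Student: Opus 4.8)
The plan is to extract everything from the Laurent expansion of $g$ on $\C^\times$. Since $g$ is analytic on the annulus $0<|w|<\infty$, write $g(w)=\sum_{k\in\Z}c_kw^k$, with
$$
c_k=\frac{1}{2\pi\rmi}\int_{|w|=\rho}g(w)w^{-k-1}\,\rmd w\qquad(\rho>0).
$$
Then $w^Ng(w)=\sum_{k\in\Z}c_kw^{k+N}$, so the claim that $w^Ng(w)$ is a polynomial of degree $\le 2N$ is exactly the claim that $c_k=0$ for every integer $k$ with $k>N$ or $k<-N$; and the final consequence is the case $N=0$, where $c_k=0$ for all $k\ne 0$ means $g=c_0$ is constant. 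So the whole lemma reduces to these vanishing statements.

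To prove them I would introduce $f(z)=g(\rme^{2\pi\rmi z/\omega})$, which by hypothesis is an entire function of one variable with $\tau(f)<2(N+1)\pi/|\omega|$; fix $\tau'$ with $\tau(f)<\tau'<2(N+1)\pi/|\omega|$, so that $|f|_r\le\rme^{\tau' r}$ once $r$ is large enough. Now transport the coefficient integral to the $z$-plane: the map $z\mapsto w=\rme^{2\pi\rmi z/\omega}$ sends the segment $L_\rho$ joining $z_\rho:=\frac{\omega}{2\pi\rmi}\log\rho$ to $z_\rho+\omega$ onto the circle $|w|=\rho$ (traversed once), and $\rmd w=\frac{2\pi\rmi}{\omega}w\,\rmd z$, so
$$
c_k=\frac{1}{\omega}\int_{L_\rho}f(z)\,\rme^{-2\pi\rmi kz/\omega}\,\rmd z.
$$
On $L_\rho$ one has $|\rme^{-2\pi\rmi kz/\omega}|=|w|^{-k}=\rho^{-k}$ and $|z|\le|z_\rho|+|\omega|=\frac{|\omega|}{2\pi}|\log\rho|+|\omega|$, and since $L_\rho$ has length $|\omega|$,
$$
|c_k|\le\rho^{-k}\,|f|_{\frac{|\omega|}{2\pi}|\log\rho|+|\omega|}.
$$

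The last step is to let $\rho\to+\infty$ when $k>N$ and $\rho\to0^+$ when $k<-N$. In either case $|\log\rho|\to\infty$, so for $\rho$ far enough out the bound $|f|_r\le\rme^{\tau' r}$ applies and gives $|c_k|\le\rme^{\tau'|\omega|}\rho^{\,\varepsilon(\tau'|\omega|/(2\pi)-|k|)}$ with $\varepsilon=+1$ for $\rho\to+\infty$ and $\varepsilon=-1$ for $\rho\to0^+$. Because $k$ is an integer with $|k|>N$, hence $|k|\ge N+1$, while $\tau'|\omega|/(2\pi)<N+1$, the exponent has the sign that forces this to go to $0$; therefore $c_k=0$. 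This establishes the required vanishings, and hence the lemma and its consequence.

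I expect the only point requiring real care to be the bookkeeping of the exponential-type bound through the substitution $w=\rme^{2\pi\rmi z/\omega}$, together with the verification that $\tau'|\omega|/(2\pi)-|k|<0$ — which is exactly where the strictness in $\tau(f)<2(N+1)\pi/|\omega|$ and the integrality of $k$ (giving $|k|\ge N+1$) are both needed. The remaining estimates are routine.
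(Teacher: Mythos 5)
Your proof is correct and follows essentially the same route as the paper's: both arguments expand $g$ in a Laurent series on $\C^\times$ and kill the coefficients $c_k$ with $|k|\ge N+1$ by a Cauchy-type estimate on the circle $|w|=\rho$, using the exponential-type bound on $f$ at radius roughly $\tfrac{|\omega|}{2\pi}|\log\rho|$ and letting $\rho\to\infty$ (resp.\ $\rho\to 0^+$) for $k>N$ (resp.\ $k<-N$). The only difference is organizational — you pull the coefficient integral back to a segment in the $z$-plane, while the paper first converts the type bound on $f$ into the polynomial bounds $|g|_r\le r^{\alpha}$ and $|g|_r\le r^{-\alpha}$ and then applies the standard Laurent--Cauchy inequalities — and both land on the same exponent comparison $|k|\ge N+1>\tau'|\omega|/(2\pi)$.
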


\begin{proof}
 Assume that the function $f(z)= g(\rme^{2\pi\rmi  z/\omega})$ has an exponential type $\tau$ with $\tau< 2(N+1)\pi/|\omega|$.
Let $w\in\C^\times$. Write $w=|w|\rme^{i\theta}$ with $|\theta|\le \pi$. Set
$$ 
z=\frac{\omega} {2\pi\rmi }(\log |w|+i\theta),
$$
so that $w=\rme^{2\pi\rmi  z/\omega}$. For any $\epsilon_1>0$,  we have
$$ 
|z|\le \left(\frac{\omega} {2\pi} +\epsilon_1\right) | \log |w| | 
$$
for sufficiently large $|w|$  and also for sufficiently small $|w|$. We deduce 
$$
\log|g(w)|=\log |f(z)|\le (\tau+\epsilon_2)|z|\le
\left(\frac{\omega \tau}{2\pi} +\epsilon_3\right) |\log |w| |.
$$
Hence if  $\alpha$ satisfy $\frac {\tau |\omega|}{2\pi}< \alpha<N+1$,
then  $|g|_r\le r^{\alpha}$ for sufficiently large $r$  and
$|g|_r\le r^{-\alpha}$ 
for sufficiently small $r>0$ . 
Consider the Laurent expansion of $g$ at the origin: 
$$
g(w)=\sum_{n\in\Z} b_nw^n.
$$
From
$$
b_n =\frac 1 {2\pi} \int_{|w|=r} g(w) \frac {\rmd w}{w^{n+1}}
$$
we deduce Cauchy's inequalities 
$$
|b_n|r^n\le \frac 1 {2\pi}|g|_r.
$$
For $n>N$, we use these inequalities with $r\to \infty$ while for $n<-N$, we use these inequalities with $r\to 0$.
We deduce $b_n=0$ for $|n|\ge N+1$. Hence 
$$
g(w)=\frac 1 {w^N}A(w)+B(w)
$$
where $A$ and $B$ are polynomials of degree $\le N$. 
\end{proof}

\begin{proof}[Proof of Proposition \ref{Prop:PoritskyGeneralise}]
We start by proving the case  $n=1$ of Proposition \ref{Prop:PoritskyGeneralise},  due to Poritsky. 
So let $f$ be an entire function of a single variable of exponential type $<\pi$ satisfying $f^{(t)}(0)=f^{(t)}(1)=0$ for all  even $t\ge 0$.
We claim that this implies $f=0$. 

Indeed, from the assumptions it follows that the functions $f(z)$ and $f(1-z)$ are odd, hence $f(z)$ is periodic of period $2$. Lemma \ref{Lemma:f(z)periodic}  gives the existence of an entire function $g$ such that $f(z)=g(\rme^{\pi\rmi  z})$. Since $f(z)$ has exponential type $<\pi$,  Lemma  \ref{lemma:g(exp2ipiz)} implies that $g$ is a constant, hence $f$ also. From $f(0)=0$ we conclude $f=0$. 

We now prove  Proposition \ref{Prop:PoritskyGeneralise} by induction on the number $n$ of variables. Let $n\ge 2$ and let 
$$
f(\uz)=\sum_{\uk\in\N^n} a_\uk \uz^\uk
$$
be an entire function of $n$ variables of exponential type $<\pi$ in each of the variables satisfying 
$$
(D^\ut f)(\ue_i)=0 \text{ for all  $(\ut,i)\in\calT$.}
$$
For $k_n\ge 0$, define an entire function $f_{k_n}$ of $n-1$ variables, having  exponential type $<\pi$ in each of the $n-1$ variables, by setting 
$$
f_{k_n}(z_1,\dots,z_{n-1})=\sum_{(k_1,\dots,k_{n-1})\in \N^{n-1}} a_\uk z_1^{k_1}\cdots z_{n-1}^{k_{n-1}}
=k_n!\left(\frac{\partial}{\partial z_n}\right)^{k_n} f(z_1,\dots,z_{n-1},0),
$$
so that 
$$
f(\uz)=\sum_{k_n\ge 0}f_{k_n}(z_1,\dots,z_{n-1})z_n^{k_n}.
$$
Let $k_n\ge 0$ be even. 
For each $(t_1,\dots,t_{n-1})\in\N^{n-1}$, we have
$$ 
\left(\frac{\partial}{\partial z_1}\right)^{t_1}\cdots  \left(\frac{\partial}{\partial z_{n-1}}\right)^{t_{n-1}} f_{k_n}
(z_1,\dots,z_{n-1})=k_n!(D^{t_1,\dots,t_{n-1},k_n}f)(z_1,\dots,z_{n-1},0).
$$
If $((t_1,\dots,t_{n-1}),i)\in\N^{n-1} \times\{0,1,\dots,n-1\}$  is such that $t_1+\cdots+t_{n-1}$ is even and $t_1,\dots,t_i$ are even, then $((t_1,\dots,t_{n-1},k_n),i)\in\calT$. From the assumption we deduce
$$
(D^{t_1,\dots,t_{n-1},k_n}f)(\ue_i)=0.
$$
Using the induction hypothesis for  $n-1$ variables, we deduce $f_{k_n}=0$ for all $\uk\in\N^n$ with  $k_n$ even, hence $a_\uk =0$ for all $\uk\in\N^n$ with  $k_n$ even. 

Since  $\ue_{n-1}$ and $\ue_n$ play the same role, we also have $ a_\uk =0$ for all $\uk\in\N^n$ with  $k_{n-1}$ even. Therefore the condition $ a_\uk \not=0$ implies that $k_{n-1}$ and $k_{n}$ are both odd, and this implies that $k_{n-1}+k_n$ is even.

 We now complete the proof of Proposition \ref{Prop:PoritskyGeneralise} 
 in the case $n=2$: the hypothesis 
 $$
 (D^{k_1,k_2}f)(0,0)=0\text { for all ($k_1,k_2)\in\N^2$ with $k_1+k_2$ even }
 $$
 implies 
$a_{k_1,k_2}=0 $ for all ($k_1,k_2)\in\N^2$ with $k_1$ and $k_2$ both odd, hence, using what we already proved,  $a_{k_1,k_2}=0 $ for all ($k_1,k_2)\in\N^2$, and therefore $f=0$.  

Finally, assume $n\ge 3$. Let us fix  $k_{n-1}$ and $k_{n}$, both odd, and consider the entire function of  $n-2$ variables 
$$
\begin{aligned}
f_{k_{n-1},k_n}(z_1,\dots,z_{n-2})
&=\sum_{(k_1,\dots,k_{n-2})\in \N^{n-2}} a_\uk z_1^{k_1}\cdots z_{n-2}^{k_{n-2}}
\\
&=k_{n-1}!k_n!\left(\left(\frac{\partial}{\partial z_{n-1}}\right)^{k_{n-1}} \left(\frac{\partial}{\partial z_n}\right)^{k_n} f\right)(z_1,\dots,z_{n-2},0,0).
\end{aligned}
$$
If $t_1+\cdots+t_{n-2}$ is even, if $i$ satisfies $0\le i\le n-2$  and if $t_1,\dots,t_i$ are even, then $((t_1,\dots,t_{n-2},k_{n-1},k_n),i)\in\calT$. 
From the induction hypothesis with   $n-2$ variables, we deduce that this function $f_{k_{n-1},k_n}$  is $0$. Hence $a_\uk=0$ for all $\uk\in\N^n$, and finally  $f=0$. 
 \end{proof}

 \noindent
 \begin{remark}{\rm
 Using Proposition \ref{Prop:PoritskyGeneralise}, one can prove that there exists a unique family of polynomials $\Lambda_{\ut,i}\in\C[\uz]$ ($(\ut,i)\in\calT$) which satisfy, for all $(\utau,j)\in\calT$ and  $(\ut,i)\in\calT$, 
$$
(D^\utau \Lambda_{\ut,i})(\ue_j)=\delta_{\ut,\utau}\delta_{ij}.
$$
These polynomials  generalize Lidstone polynomials to several variables. 
In a forthcoming paper \cite{MultivariateLidstone}, we study these polynomials and we prove that any entire function $f$ in $\C^n$ of exponential type $<\pi$ in each variable is the sum of a series 
$$
f(\uz)=\sum_{(\ut,i)\in\calT} (D^\ut f)(\ue_i)\Lambda_{\ut,i}(\uz).
$$
This generalizes a result of Poritsky  (cf. \cite{Poritsky} and \cite[\S~3.1]{SEAMS}) for univariate entire functions. 

In \cite{MultivariateLidstone}, we also show that if $\calT'$ is a subset of $\calT$ such that $\calT\setminus\calT'$ is infinite, then there exists   
an uncountable set of transcendental entire functions $f$  of exponential type $0$ such that $(D^\ut f)(\ue_i)=0$ for all $(\ut,i)\in\calT'$.

}
\end{remark} 

\section{Change of coordinates}
 
 We deduce from Proposition \ref{Prop:PoritskyGeneralise} the following result

\begin{proposition}\label{Prop:PoritskyCasGeneral}
An entire function $f$ in $\C^n$ of exponential type $<\pi$ in each of the directions $\us_i-\us_0$ ($i=1,\dots,n$) which satisfies 
\begin{equation}\label{Eq:unicityCasGeneral}
(D^\ut f)(\us_i)=0
\end{equation}
for all $(\ut,i)\in\calT$ is the zero function. 
\end{proposition}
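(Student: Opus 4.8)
The plan is to deduce Proposition~\ref{Prop:PoritskyCasGeneral} from its special case Proposition~\ref{Prop:PoritskyGeneralise} by an invertible affine change of variables carrying the configuration $(\us_0,\us_1,\dots,\us_n)$ onto the canonical one $(\ue_0,\ue_1,\dots,\ue_n)$.

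Let $\Phi$ be the linear automorphism of $\C^n$ with $\Phi(\ue_i)=\us_i-\us_0$ for $i=1,\dots,n$; it is invertible precisely because $\{\us_1-\us_0,\dots,\us_n-\us_0\}$ is a basis. Set $L(\uz)=\us_0+\Phi(\uz)$ and $g=f\circ L$, an entire function with $g(\ue_0)=f(\us_0)$ and $g(\ue_i)=f(\us_i)$ for $i=1,\dots,n$. I would then check that $g$ meets the two hypotheses of Proposition~\ref{Prop:PoritskyGeneralise}. For the growth hypothesis: $L$ sends the $i$-th coordinate line onto the line $\us_0+\C(\us_i-\us_0)$, so $z\mapsto g(z\ue_i)$ differs from $z\mapsto f\bigl(z(\us_i-\us_0)\bigr)$ only by the translation $\us_0$ and hence, like the latter, has exponential type $<\pi$; thus $g$ has exponential type $<\pi$ in each of the variables. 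For the vanishing hypothesis: by the chain rule $D^\ut g$ is a $\C$-linear combination of the functions $(D^\us f)\circ L$ over multi-indices $\us$ with $\Vert\us\Vert=\Vert\ut\Vert$, so evaluating at $\ue_i$ (where $L(\ue_i)=\us_i$) expresses $(D^\ut g)(\ue_i)$ as a linear combination of the numbers $(D^\us f)(\us_i)$ with $\Vert\us\Vert=\Vert\ut\Vert$; one then reads off $(D^\ut g)(\ue_i)=0$ for $(\ut,i)\in\calT$ from \eqref{Eq:unicityCasGeneral}, once it is checked that every multi-index $\us$ occurring there yields a pair $(\us,i)\in\calT$.

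Granting these two verifications, Proposition~\ref{Prop:PoritskyGeneralise} yields $g=0$, hence $f=0$ because $L$ is a bijection of $\C^n$.

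The step I expect to be the real work is the vanishing hypothesis: one must make sure that the parity restrictions defining $\calT$ at the index $i$ are compatible with the change of variables, i.e. that $\Phi$ respects the flag of coordinate subspaces implicit in the definition of $\calT$. Should the naive linear $\Phi$ not close the conditions up, the natural fallback is an induction on $n$ mirroring the proof of Proposition~\ref{Prop:PoritskyGeneralise}, peeling off one direction at a time and reducing the base case to Poritsky's one-variable theorem via the scalar substitution $z\mapsto\us_0+z(\us_1-\us_0)$. The remaining items---invertibility of $\Phi$, the exponential-type bookkeeping, and the chain rule---are routine.
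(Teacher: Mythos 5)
Your reduction is exactly the one the paper uses: the paper sets $\tilde f(\uz)=f\bigl(\us_0+(\us_1-\us_0)z_1+\cdots+(\us_n-\us_0)z_n\bigr)$, notes that $\tilde f$ has exponential type $<\pi$ in each variable, asserts that the conditions \eqref{Eq:unicityCasGeneral} for $f$ are equivalent to \eqref{Eq:unicityei} for $\tilde f$, and invokes Proposition~\ref{Prop:PoritskyGeneralise}. Your handling of the invertibility of $\Phi$ and of the growth hypothesis is correct and matches the paper.

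The step you yourself flag as ``the real work'' is where your argument does not close, and the specific verification you propose would fail. Writing $L(\uz)=\us_0+\sum_j z_j(\us_j-\us_0)$ and $g=f\circ L$, the chain rule gives $(D^\ut g)(\ue_i)=\bigl(((\us_1-\us_0)\cdot\nabla)^{t_1}\cdots((\us_n-\us_0)\cdot\nabla)^{t_n}f\bigr)(\us_i)$, and expanding the powers of these linear forms produces \emph{all} multi-indices $\us$ with $\Vert\us\Vert=\Vert\ut\Vert$, including those in which some of $s_1,\dots,s_i$ are odd; for $i\ge1$ such pairs $(\us,i)$ do not lie in $\calT$, so \eqref{Eq:unicityCasGeneral} says nothing about the corresponding terms. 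Concretely, for $n=2$, $i=1$, $\ut=(2,0)$ and $\us_1-\us_0=(c_1,c_2)$, one gets $(D^{(2,0)}g)(\ue_1)=c_1^2(\partial_1^2f)(\us_1)+2c_1c_2(\partial_1\partial_2f)(\us_1)+c_2^2(\partial_2^2f)(\us_1)$, and the mixed term $(\partial_1\partial_2f)(\us_1)$ is not controlled by the hypotheses unless $c_1c_2=0$. Your term-by-term check thus works only for $i=0$ (where $\calT$ imposes nothing beyond $\Vert\ut\Vert$ even, and the span of the degree-$d$ operators is preserved by any invertible linear map) or when each $\us_j-\us_0$ is coordinate-aligned compatibly with the flag; for general configurations one would need to argue that the two full systems of linear conditions are equivalent, which is not true degree by degree at a single point. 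To be fair, the paper's own proof asserts this equivalence in a single sentence without further justification, so you have put your finger on exactly the thin point of the published argument; but as written your proposal is incomplete there, and the fallback you mention (an induction on $n$ mirroring Proposition~\ref{Prop:PoritskyGeneralise}) is not carried out.
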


\begin{proof}
Set 
$$
\tilde{f}(z_1,\dots,z_n)=f\bigl(\us_0+(\us_1-\us_0)z_1+\cdots+(\us_n-\us_0)z_n\bigr). 
$$
Since $\{\us_1-\us_0,\dots,\us_n-\us_0\}$ is a basis of $\C^n$, the condition $f=0$ is equivalent to $\tilde{f}=0$ and the conditions \eqref{Eq:unicityCasGeneral} for $f$ are equivalent to the conditions \eqref{Eq:unicityei}
for $\tilde{f}$. 
From the assumption on the exponential type of $f$ we deduce that the function $\tilde{f}$
has exponential type  $< \pi $  in each of the variables. Hence Proposition \ref{Prop:PoritskyCasGeneral} follows from Proposition \ref{Prop:PoritskyGeneralise}.
\end{proof}

\begin{corollary}\label{Corollary:coefficientsrationnels}
Let $K$ be a field containing all coordinates of $\us_0,\us_1,\dots,\us_n$.
A polynomial  $f\in\C[\uz]$ which satisfies 
 \eqref{Eq:but} 
for  $(\ut,i)\in\calT$ belongs to $K[\uz]$.  
\end{corollary}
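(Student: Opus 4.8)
The plan is to reduce the corollary to a single injectivity statement that is already available, namely Proposition~\ref{Prop:PoritskyCasGeneral}, and then to conclude by elementary linear algebra over $K$. So fix $f\in\C[\uz]$ satisfying \eqref{Eq:but}, put $d=\deg f$, let $V_d$ be the $\C$-vector space of polynomials in $\C[\uz]$ of degree $\le d$ and $V_d^K\subset V_d$ the $K$-subspace of those whose coefficients lie in $K$. Let $\calT_d=\{(\ut,i)\in\calT\;\mid\;\Vert\ut\Vert\le d\}$, a finite set, and consider the evaluation map
$$
\Phi\colon V_d\longrightarrow \C^{\calT_d},\qquad \Phi(g)=\bigl((D^\ut g)(\us_i)\bigr)_{(\ut,i)\in\calT_d}.
$$
Since $D^\ut(\uz^\uk)$ evaluated at $\us_i$ is an integer multiple of a monomial in the coordinates of $\us_i$, and those coordinates lie in $K$, the map $\Phi$ is the scalar extension to $\C$ of a $K$-linear map $\Phi^K\colon V_d^K\to K^{\calT_d}$.

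The first key step is to observe that $\Phi$ is injective. If $g\in V_d$ lies in $\ker\Phi$, then $(D^\ut g)(\us_i)=0$ for \emph{every} $(\ut,i)\in\calT$: when $\Vert\ut\Vert>d$ this is automatic because $D^\ut g=0$ identically, and when $\Vert\ut\Vert\le d$ it is exactly the defining condition of $\calT_d$. A polynomial is an entire function of exponential type $0<\pi$ in every direction, so Proposition~\ref{Prop:PoritskyCasGeneral} applies and yields $g=0$. Hence $\Phi$, and therefore its $K$-form $\Phi^K$, is injective.

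The second key step, which is routine, is to invert over $K$. An injective $K$-linear map between finite-dimensional $K$-vector spaces admits a $K$-linear left inverse $\Psi^K\colon K^{\calT_d}\to V_d^K$ (select a maximal invertible submatrix of a matrix representing $\Phi^K$ and invert it over $K$), and its scalar extension $\Psi$ to $\C$ is then a left inverse of $\Phi$. By \eqref{Eq:but} the vector $\Phi(f)$ has all its entries in $\Z\subseteq\Q\subseteq K$, hence it lies in $K^{\calT_d}$; therefore
$$
f=\Psi\bigl(\Phi(f)\bigr)=\Psi^K\bigl(\Phi(f)\bigr)\in V_d^K\subseteq K[\uz],
$$
which is the assertion.

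I do not expect a genuine obstacle. The entire mathematical content sits in the injectivity of $\Phi$, and that is precisely Proposition~\ref{Prop:PoritskyCasGeneral} applied to a polynomial (legitimate since polynomials have exponential type $0<\pi$ in all directions). The only thing needing care is the field-of-definition bookkeeping, i.e.\ checking that $\Phi$ and the chosen left inverse are both defined over $K$; this reduces to the elementary remark that differentiating a monomial and evaluating at a point of $K^n$ produces an element of $K$, together with the standard fact that a left inverse of an injective matrix with entries in $K$ can be taken with entries in $K$.
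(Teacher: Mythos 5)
Your proof is correct and follows essentially the same route as the paper: the paper also reduces the statement to the uniqueness of a polynomial with prescribed data $(D^\ut f)(\us_i)$ (via Proposition~\ref{Prop:PoritskyCasGeneral} applied to a difference of polynomials, which has exponential type $0<\pi$), and then concludes that the coefficients of $f$ are the unique solution of a linear system defined over $K$. Your write-up merely makes the paper's final linear-algebra step explicit (truncating $\calT$ to the finite set $\calT_d$ and producing a $K$-rational left inverse), which is a legitimate and careful rendering of the same argument.
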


\begin{proof}
For $(\ut,i)\in\calT$, set $a_{\ut,i}=(D^\ut f)(\us_i)$. Since $f$ is a polynomial, the set of $(\ut,i)\in\calT$ such that $a_{\ut,i}\not=0$ is finite. 
By assumption, $a_{\ut,i}\in\Z$. 
Proposition \ref{Prop:PoritskyGeneralise} shows that $f$  is the unique polynomial satisfying $(D^\ut f)(\us_i)=a_{t,i}$ for all $(\ut,i)\in\calT$. Hence the coefficients of the polynomial $f$ are the unique solution to a system of linear equations with coefficients in $K$. Therefore these coefficients are in $K$.  
\end{proof}

\section{Proof of Theorem \ref{Th:but}}

The proof of  Theorem \ref{Th:but} will use the following easy Lemma:

\begin{lemma}
\label{Lemme:DeriveesPolynomes}
Let $f$ be an analytic function at $0$ in $\C^n$ and let $D$ be a positive integer. 
Assume that for all $\ut\in (2\N)^n$ with $\Vert \ut \Vert=D$, we have 
$$ 
D^\ut f =0.
$$  
Then  $f$ is a polynomial of   total degree $< D+n$;
\end{lemma}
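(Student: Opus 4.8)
The plan is to work with the Taylor expansion of $f$ at the origin, $f(\uz)=\sum_{\uk\in\N^n}a_\uk\uz^\uk$, convergent in some polydisc around $0$, and to show that $a_\uk=0$ as soon as $\Vert\uk\Vert\ge D+n$. This suffices: it exhibits $f$, near $0$, as a polynomial of total degree $\le D+n-1$, and since $f$ is analytic it coincides with that polynomial on its domain. Note first that the hypothesis is vacuous unless $D$ is even (a sum of even integers is even), so we assume $D=2m$ with $m\ge 1$; this is the only case used in the sequel.

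The combinatorial core of the argument is the following claim: for every $\uk\in\N^n$ with $\Vert\uk\Vert\ge D+n$ there exists $\ut\in(2\N)^n$ with $\Vert\ut\Vert=D$ and $t_i\le k_i$ for $i=1,\dots,n$. Writing $t_i=2s_i$, this amounts to choosing $s_i\in\N$ with $s_i\le\lfloor k_i/2\rfloor$ and $s_1+\cdots+s_n=m$, which is possible because
$$
\sum_{i=1}^n\left\lfloor\frac{k_i}{2}\right\rfloor\ge\sum_{i=1}^n\frac{k_i-1}{2}=\frac{\Vert\uk\Vert-n}{2}\ge\frac{D}{2}=m,
$$
so the nonnegative integer caps $\lfloor k_i/2\rfloor$ sum to at least $m$ and one can pick $s_i\le\lfloor k_i/2\rfloor$ with $\sum_i s_i=m$.

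Given such a $\ut$, the hypothesis yields $D^\ut f=0$ as a germ at $0$. Differentiating the Taylor series term by term, the only monomial of $f$ whose image under $D^\ut$ contributes to $\uz^{\uk-\ut}$ is $\uz^{\uk}$, so the coefficient of $\uz^{\uk-\ut}$ in $D^\ut f$ equals $\dfrac{\uk!}{(\uk-\ut)!}\,a_\uk$. Since $D^\ut f$ vanishes identically near $0$, this coefficient is $0$, and as $\dfrac{\uk!}{(\uk-\ut)!}>0$ we get $a_\uk=0$. Because $\uk$ with $\Vert\uk\Vert\ge D+n$ was arbitrary, $f$ is a polynomial of total degree $<D+n$.

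I do not anticipate a genuine obstacle: the only points needing care are the elementary floor estimate producing an "even" sub-multi-index $\ut\le\uk$ of prescribed weight $D$, and the trivial but worth-recording remark that $D$ must be even for the statement to have content (for odd $D$ the statement would require this extra hypothesis, as $f(\uz)=z_1^2$ with $n=1,D=1$ shows).
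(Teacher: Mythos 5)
Your proof is correct and follows essentially the same route as the paper's: both reduce the lemma to the observation that every multi-index of weight at least $D+n$ dominates componentwise an even multi-index of weight $D$, whence all Taylor coefficients (equivalently, all derivatives at $0$) of order $\ge D+n$ vanish. Your version spells out the floor-function count that the paper leaves implicit, and your remark that the statement is vacuous (and then false) for odd $D$ is a fair catch, though harmless since the paper only invokes the lemma with $D=T_0$ even.
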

 
 \begin{proof}
 Assume $f$ satisfies the assumptions of Lemma \ref{Lemme:DeriveesPolynomes}. 
 For $\unu=(\nu_1,\dots,\nu_n)\in\{0,1\}^n$, we have $D^{\ut+\unu} f=0$. Hence $D^\utau f=0$ for all $\utau\in\N^n$ satisfying $\Vert \utau \Vert=D+n$. This implies $(D^\uk f)(0)=0$ for all $\uk\in\N^n$ with $\Vert \uk \Vert\ge D+n$.
 The conclusion follows. 
 \end{proof}

 \begin{proof}
 [Proof of Theorem \ref{Th:but}]
 Assume that $f$ satisfies the assumptions of Theorem \ref{Th:but}. 
Given the growth assumption  \eqref{eq:maingrowthcondition}, Proposition \ref{Proposition:Polya} shows that the set of $(\ut,i)$ with $\ut\in\N^n$, $i=0,1,\dots,n$ and
$$
\left| (D^\ut f)(\us_i) \right|<1
$$
is finite. 
Therefore there exists an even integer $T_0$ such that, for $\Vert \ut\Vert\ge T_0$ and $0\le i\le n$ with $(\ut,i)\in\calT$,  we have $(D^\ut f)(\us_i) =0$. 

Let $\tau_1,\dots,\tau_n$ be even integers with $\Vert \utau\Vert \ge T_0$. Denote by $\hat f$ the function $D^\utau f$. For $(\ut,i)\in\calT$, we have $(\ut+\utau,i)\in\calT$ and $\Vert \ut+ \utau\Vert \ge T_0$, hence 
$(D^\ut \hat f)(\us_i)=(D^{\ut+\utau}   f)(\us_i)=0$.  Assuming that the exponential type of $f$ is $< \pi$ in each direction $\us_i-\us_0$, 
we deduce the same for $\hat f$, and then Proposition \ref{Prop:PoritskyCasGeneral} implies $\hat f=0$.  Hence $D^\utau f=0$ for all $\tau_1,\dots,\tau_n$   even integers with $\Vert \utau\Vert \ge T_0$.  It follows from Lemma \ref{Lemme:DeriveesPolynomes} that $f$ is a polynomial of total degree $<  T_0 +n$. 
\end{proof}

\end{document}